\newtheorem{thm}{Theorem}
\newtheorem{prop}{Proposition}
\newtheorem{lem}{Lemma}
\newcommand\R{\mathbb{R}}
\newcommand\N{\mathbb{N}}
\newcommand\Hc{\mathscr{H}}
\newcommand{\M}{\mathscr{M}}
\author[P. Pasteczka]{Pawe\l{} Pasteczka}
\address{Institute of Mathematics \\ Pedagogical University of Krak\'ow \\ Podchor\k{a}\.zych str. 2, 30-084 Krak\'ow, Poland}
\email{pawel.pasteczka@up.krakow.pl}
\subjclass[2010]{26D15, 26E60}
\keywords{Hardy inequality, weak-Hardy inequality, continuouity, selfmappings of $\ell_1$}
\newcommand{\operator}[1]{\mathop{\vphantom{\sum}\mathchoice
{\vcenter{\hbox{\LARGE $#1$}}}
{\vcenter{\hbox{\Large $#1$}}}{#1}{#1}}\displaylimits}
\def\Mst_#1^#2{\operator{\mathscr{M}_{\mbox{\scriptsize$\#$}}\!\!}_{#1}^{#2}\,\,}
\def\eq#1{{\rm(\ref{#1})}}
\def\Eq#1#2{\ifthenelse{\equal{#1}{*}}
  {\begin{equation*}\begin{aligned}[]#2\end{aligned}\end{equation*}}
  {\begin{equation}\begin{aligned}[]\label{#1}#2\end{aligned}\end{equation}}}
\title{On negative results concerning weak-Hardy means}
\begin{document}

\begin{abstract}
We establish the test which allows to show that a mean does not admit a weak-Hardy property. As a result we prove that Hardy and weak-Hardy properties are equivalent in the class of  homogeneous, symmetric, repetition invariant, and Jensen concave mean on $\R_+$.

More precisely, for every mean $\M \colon \bigcup_{n=1}^\infty \R_+^n \to \R$ as above, the inequality $$\M(a_1)+\M(a_1,a_2)+\dots<\infty$$
holds for all $a \in \ell^1(\R_+)$ if and only if there exists a positive, real constant $C$ (depending only on $\M$) such that
$$\M(a_1)+\M(a_1,a_2)+\dots<C \cdot (a_1+a_2+\cdots)$$
for every sequence $a \in \ell^1(\R_+)$.
\end{abstract}

\maketitle
\section{Introduction}
The history of the Hardy property stared in 1920, when it was shown for all power means below the arithmetic one (see Hardy~\cite{Har20}). Later it was improved by the number of authors, which is described in details in surveys by Pe\v{c}ari\'c--Stolarsky \cite{PecSto01}, Duncan--McGregor \cite{DunMcg03}, and in a book of Kufner--Maligranda--Persson \cite{KufMalPer07}.

Recall that a mean $\M$ on an interval $I \subset [0,+\infty)$ (that is a function $\M \colon  \bigcup_{n=1}^\infty I^n \to I$ such that $\min(v)\le \M(v)\le \max(v)$ for every vector $v$ in its domain)
is said to be a \emph{Hardy mean} if there exists a finite constant $C$ such that
\Eq{E:defH}{
\sum_{n=1}^\infty \M(a_1,\dots,a_n) \le C \sum_{n=1}^\infty a_n \quad \text{ for all }\quad a \in \ell_1(I).
}
This definition appears first time in P\'ales-Persson \cite{PalPer04}, however there are a number of earlier results which can be expressed in this framework (including the Hardy's one).
The smallest extended real number $C$ satisfying \eq{E:defH} is called the \emph{Hardy constant of $\M$} and denoted by $\Hc(\M)$. In this sense Hardy means are precisely the ones with a finite Hardy constant.

A slight modification of this definition was proposed recently by the author in \cite{Pas20b}. Namely, we say that $\M$ is a \emph{weak-Hardy mean} if 
\Eq{*}{
\sum_{n=1}^\infty \M(a_1,\dots,a_n) < +\infty \quad \text{ for all }\quad a \in \ell_1(I).
}
It is obvious that every Hardy mean is also weak-Hardy, nevertheless the converse implication is not valid. Indeed, one can find a broad class of weak-Hardy, quasiarithmetic means which are not Hardy means (see \cite{Pas20b} for details). 

An important test which allows to show that a mean has no Hardy property was proved by the author in \cite{Pas15c} (see Proposition~\ref{prop:NegHar} below). In \cite{Pas20b} there was a test which allows to verify that a homogeneous monotone, and repetition invariant mean in not a weak-Hardy mean (see Proposition~\ref{prop:NegWHar} below), but it was in completely different spirit than the one in \cite{Pas15c}. 

The aim of this paper is to prove an analogue of the result from \cite{Pas15c} for the weak-Hardy property. It turns out that in this case we need to additionally claim that a mean is homogeneous and monotone. As a result we prove that the Hardy and the weak-Hardy property coincide for means which are concave, homogeneous, and repetition invariant.

\subsection*{Properties of means}
Based on \cite{PalPas16}, we say that $\M$ is \emph{symmetric} (resp. \emph{concave}) if for all 
$n\in\N$ the $n$-variable restriction $\M|_{I^n}$ is symmetric (resp. concave). If $I=\R_+$, we can analogously define the notion of homogeneity of $\M$. \emph{Monotonicity} of mean is associated with its nondecreasingness in each variable. Finally, the mean $\M$ is called \emph{repetition invariant} if for all $n,m\in\N$
and $(v_1,\dots,v_n)\in I^n$ the following identity is satisfied
\Eq{*}{
  \M(\underbrace{v_1,\dots,v_1}_{m\text{-times}},\dots,\underbrace{v_n,\dots,v_n}_{m\text{-times}})
   =\M(v_1,\dots,v_n).
}

\subsection*{State of arts}

Following the idea of \cite{Pas20b}, we say that a sequence $(a_n)_{n=1}^\infty$ of positive numbers is \emph{nearly increasing} if there exists $\varepsilon>0$ such that for every $m,\,n \in\N$ with $m \le n$ we have $\varepsilon a_m \le a_n$. In what follows, let us recall few results related to this topic.

\begin{prop}[\cite{Pas15c}, Theorem~1.1]\label{prop:NegHar}
Let $I \subset \R_+$ be an interval with $\inf I=0$ and $\M \colon \bigcup_{n=1}^\infty I^n \to I$ be a mean. If there exists a sequence $(a_n)_{n=1}^\infty$ of numbers in $I$ such that 
\Eq{E:condThm}{
\sum_{n=1}^\infty a_n=+\infty \quad\text{ and }\quad\lim_{n \to \infty} a_n^{-1} \M(a_1,\dots,a_n) = +\infty,
}
then $\M$ is not a Hardy mean.
\end{prop}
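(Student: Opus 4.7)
The plan is to argue by contradiction: assume $\M$ is a Hardy mean with constant $C$, and derive an absurdity from the divergence of $\sum a_n$ coupled with $a_n^{-1}\M(a_1,\dots,a_n)\to+\infty$. The heuristic is that a finite-constant Hardy inequality gives an essentially linear upper bound $\sum_{n\le N}\M(a_1,\dots,a_n)\lesssim C\cdot(a_1+\cdots+a_N)$, whereas the hypothesis forces this ratio to explode.

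The first step is to realize the assumed sequence as the initial segment of an $\ell_1$ sequence. Since $\inf I=0$, for each $N\in\N$ I can pick $\varepsilon_1,\varepsilon_2,\dots\in I$ with $\sum_j\varepsilon_j\le 1$, and define $b^{(N)}\in\ell_1(I)$ by $b^{(N)}_n=a_n$ for $n\le N$ and $b^{(N)}_n=\varepsilon_{n-N}$ otherwise. The point is that the first $N$ entries of $b^{(N)}$ coincide with $(a_1,\dots,a_N)$, so for every $n\le N$ one has $\M(b^{(N)}_1,\dots,b^{(N)}_n)=\M(a_1,\dots,a_n)$. Applying the assumed Hardy inequality to $b^{(N)}$ and throwing away the tail terms yields
\[
\sum_{n=1}^{N}\M(a_1,\dots,a_n)\;\le\;\sum_{n=1}^{\infty}\M\bigl(b^{(N)}_1,\dots,b^{(N)}_n\bigr)\;\le\;C\bigl(S_N+1\bigr),
\]
where $S_N:=a_1+\cdots+a_N$.

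The second step uses the hypothesis to show the left-hand side grows much faster than $S_N$. Fix any $K>0$ and choose $N(K)$ so that $\M(a_1,\dots,a_n)\ge K a_n$ for all $n\ge N(K)$. Then for $N\ge N(K)$,
\[
\sum_{n=1}^{N}\M(a_1,\dots,a_n)\;\ge\;K\sum_{n=N(K)}^{N}a_n\;=\;K\bigl(S_N-S_{N(K)-1}\bigr).
\]
Dividing by $S_N+1$ and invoking $S_N\to+\infty$ (which is given) produces $\liminf_{N\to\infty}\frac{\sum_{n=1}^{N}\M(a_1,\dots,a_n)}{S_N+1}\ge K$. Combined with the Hardy bound from the first step, this forces $C\ge K$ for every $K>0$, contradicting $C<\infty$.

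I do not expect a serious obstacle here: the only delicate point is ensuring the padding sequence $(\varepsilon_j)$ actually lives in $I$, which is exactly what $\inf I=0$ guarantees (if $0\in I$ one could even pad with zeros, but the formulation allows open intervals, hence the need to appeal to $\inf I=0$). Everything else is a one-line use of the Hardy inequality together with the elementary observation that the Ces\`aro-type average $(S_N-S_{N(K)-1})/S_N$ tends to $1$ because the partial sums diverge.
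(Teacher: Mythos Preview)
Your argument is correct. The padding device (extending $(a_1,\dots,a_N)$ to an $\ell_1(I)$-sequence using $\inf I=0$) together with the elementary comparison $K(S_N-S_{N(K)-1})\le C(S_N+1)$ and $S_N\to\infty$ yields the contradiction cleanly; the only care needed is the choice of the $\varepsilon_j$'s inside $I$, and you handle that.

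Note, however, that the present paper does \emph{not} contain a proof of this proposition: it is quoted verbatim from \cite{Pas15c} (Theorem~1.1) as background, so there is no in-paper argument to compare against. Your proof is essentially the natural one and matches the spirit of the original source. The paper's own contribution is the weak-Hardy analogue (Theorem~\ref{thm:main}), where the contradiction-by-partial-sums approach you use is not available (a finite Hardy constant is exactly what is missing), which is why Lemma~\ref{lem:1} is needed there.
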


\begin{prop}[\cite{Pas20b}, Theorem~2.2]\label{prop:NegWHar}
Let $\M$ be a homogeneous and monotone mean defined on $\R_+$. If there exists a sequence $(a_n)$ of positive numbers such that
 \begin{enumerate}[i)]
  \item $\sum_{n=1}^\infty a_n=+\infty$, 
  \item a sequence $(a_n^{-1}\M(a_1,\dots,a_n))_{n=1}^\infty$ is nearly increasing and divergent,
  \item $\sum_{n=1}^\infty a_n^{1+s} \big(\M(a_1,\dots,a_n)\big)^{-s}$ is finite for some $s \in \R_+$,
 \end{enumerate}
then $\M$ is not a weak-Hardy mean.
\end{prop}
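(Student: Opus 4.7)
I would prove the contrapositive by constructing a summable sequence $b \in \ell_1(\R_+)$ along which $\sum_n \M(b_1,\dots,b_n) = +\infty$. Write $M_n := \M(a_1,\dots,a_n)$ and $c_n := M_n/a_n$. By (ii) the sequence $(c_n)$ is nearly increasing and diverges to $+\infty$, so its nondecreasing envelope $c'_n := \inf_{k\ge n} c_k$ satisfies $\varepsilon c_n \le c'_n \le c_n$ for some $\varepsilon > 0$ and $c'_n \to +\infty$.

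Define $\lambda_n := (c'_n)^{-s}$, which is nonincreasing with limit $0$, and set $b_n := \lambda_n a_n$. Condition (iii) gives
\[
\sum_{n=1}^\infty b_n \le \varepsilon^{-s}\sum_{n=1}^\infty \frac{a_n^{1+s}}{M_n^s} < +\infty,
\]
so $b \in \ell_1(\R_+)$. Since $\lambda_k \ge \lambda_n$ whenever $k \le n$, monotonicity and homogeneity of $\M$ yield
\[
\M(b_1,\dots,b_n) \ge \M(\lambda_n a_1,\dots,\lambda_n a_n) = \lambda_n M_n \ge \varepsilon^{s}\, a_n c_n^{1-s}.
\]
If $s \le 1$ then $c_n^{1-s} \ge 1$ for all sufficiently large $n$, and hence $\sum_n \M(b_1,\dots,b_n) \ge \varepsilon^s \sum_n a_n = +\infty$ by (i), contradicting the weak-Hardy property.

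The case $s > 1$ is genuinely harder, because $a_n c_n^{1-s} \to 0$ and the pointwise scaling above no longer closes the argument. Here I would refine the construction by a dyadic block decomposition. Partition $\N$ into blocks $B_k := \{n : 2^k \le c'_n < 2^{k+1}\}$, let $A_k := \sum_{n \in B_k} a_n$, and take $b_n := \mu_k a_n$ for $n \in B_k$ with a nonincreasing sequence $(\mu_k)$. Because $c'_n$ is nondecreasing, the blocks $B_k$ are consecutive intervals in $\N$, so the scaling $n\mapsto \mu_{k(n)}$ is nonincreasing in $n$ and the monotonicity-and-homogeneity step may be rerun block by block. Conditions (i) and (iii) translate (up to multiplicative constants in $\varepsilon$) into $\sum_k A_k = +\infty$ and $\sum_k A_k 2^{-ks} < +\infty$, while the arguments above give
\[
\sum_{n=1}^\infty b_n = \sum_k \mu_k A_k, \qquad \sum_{n=1}^\infty \M(b_1,\dots,b_n) \gtrsim \sum_k \mu_k A_k 2^k.
\]
The task reduces to producing a nonincreasing $(\mu_k)$ with $\sum_k \mu_k A_k$ finite and $\sum_k \mu_k A_k 2^k = +\infty$.

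The principal obstacle is precisely this weight-selection step under the nonincreasingness constraint on $(\mu_k)$. Dropping monotonicity makes the choice elementary (e.g.\ $\mu_k = 2^{-k}/(k A_k)$ yields $\sum \mu_k A_k = \sum 2^{-k}/k < +\infty$ and $\sum \mu_k A_k 2^k = \sum 1/k = +\infty$); however, the scaling must be monotone for the monotonicity argument on $\M$ to apply. Handling this cleanly calls for a regularization of $(A_k)$, for instance by passing to a suitable nonincreasing majorant that still respects condition (iii) up to a constant, and this is where I expect the technical heart of the proof to lie. Once such $(\mu_k)$ is produced, the argument closes exactly as in the case $s\le 1$ and yields $b \in \ell_1(\R_+)$ with $\sum_n \M(b_1,\dots,b_n) = +\infty$, contradicting the weak-Hardy property.
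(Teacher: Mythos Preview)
The paper does not prove this proposition directly---it is quoted from \cite{Pas20b}---but instead proves the stronger Theorem~\ref{thm:main}, which drops hypotheses (ii) and (iii) altogether. The key device is Lemma~\ref{lem:1}: given any bounded positive sequence $(a_n)$ with $\sum a_n=\infty$ and any $c_n\to\infty$, one can find a strictly decreasing $(r_n)$ with $\sum a_n r_n<\infty$ and $\sum a_n c_n r_n=\infty$. Setting $b_n:=a_n r_n$ and using homogeneity and monotonicity exactly as you do gives $\M(b_1,\dots,b_n)\ge r_n\M(a_1,\dots,a_n)=a_n c_n r_n$, and the conclusion follows without ever invoking (ii) or (iii).

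Your argument shares the same scaling scaffold but selects $\lambda_n$ from hypothesis (iii). For $s\le 1$ this is correct and pleasantly short. For $s>1$, however, there is a genuine gap: you reduce to finding a nonincreasing $(\mu_k)$ with $\sum_k\mu_k A_k<\infty$ and $\sum_k\mu_k A_k\,2^k=\infty$, and then stop. That residual problem is precisely Lemma~\ref{lem:1} (with $a_k\leftarrow A_k$ and $c_k\leftarrow 2^k$), so the ``technical heart'' you anticipate is exactly what the paper isolates and proves. Note also that Lemma~\ref{lem:1} is stated for bounded sequences, whereas your block sums $A_k$ need not be bounded; the paper avoids this by applying the lemma to the original $(a_n)$---shown to be bounded via $a_n<\max(a_1,\dots,a_n)$ once $c_n>1$---rather than to blocks. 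The cleanest repair of your proof is therefore to bypass the dyadic decomposition and invoke Lemma~\ref{lem:1} on $(a_n)$ directly, at which point (ii) and (iii) become redundant and one recovers Theorem~\ref{thm:main}.
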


\begin{prop}[\cite{Pas20b}, Corollary~2.3]
Let $\M$ be a homogeneous, monotone, and repetition invariant mean. If there exist $C,\,D\in \R_+$ and $n_0 \in \N$ such that 
\Eq{*}{
\M(a_1,\dots,a_n) \ge \frac{C (\ln n)^D}n \qquad \text{for all }n \ge n_0,
}
then $\M$ is not a weak-Hardy mean.
\end{prop}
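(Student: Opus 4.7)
The plan is to derive this corollary from Proposition~\ref{prop:NegWHar}. Given the additional structural hypothesis of repetition invariance and the assumed lower bound $\M(a_1,\dots,a_n) \ge C(\ln n)^D/n$, I will construct a sequence $(a_n)_{n=1}^\infty$ of positive numbers satisfying the three conditions of Proposition~\ref{prop:NegWHar}. The natural candidate is the harmonic sequence $a_n = 1/n$.

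The easy parts go as follows. Condition (i), $\sum a_n = +\infty$, is just the divergence of the harmonic series. For condition (iii), the assumed lower bound yields
\[
\sum_{n \ge n_0} a_n^{1+s}\M(a_1,\dots,a_n)^{-s} \le C^{-s}\sum_{n \ge n_0} \frac{1}{n(\ln n)^{Ds}} < +\infty
\]
whenever $s > 1/D$, so I would pick any such $s$. The divergent half of (ii) is immediate from the hypothesis: $a_n^{-1}\M(a_1,\dots,a_n) \ge C(\ln n)^D \to +\infty$.

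The principal obstacle is the \emph{nearly increasing} half of (ii), namely producing $\varepsilon > 0$ such that $\varepsilon\cdot m\,\M(a_1,\dots,a_m) \le n\,\M(a_1,\dots,a_n)$ for every $m \le n$. This is exactly the step that forces the use of repetition invariance, which did not enter any of the preceding verifications. My plan is to apply repetition invariance to bring both $\M(1,1/2,\dots,1/m)$ and $\M(1,1/2,\dots,1/n)$ into $\M$-evaluations on two sequences of a common length $mn$, with each entry of the shorter base sequence repeated the appropriate number of times; then to compare these two length-$mn$ sequences termwise using monotonicity; and finally to rescale by homogeneity to convert the resulting inequality into the desired bound between $m\,\M(a_1,\dots,a_m)$ and $n\,\M(a_1,\dots,a_n)$. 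The delicate point, which I expect to carry most of the technical work, is arranging the comparison so that the resulting $\varepsilon$ is uniform in $m$ and $n$; the crude estimates $\M(a_1,\dots,a_m)\le 1$ and $\M(a_1,\dots,a_n)\ge C(\ln n)^D/n$ alone are insufficient, and the combinatorics of the repetition pattern must be exploited carefully.
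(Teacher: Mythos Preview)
The paper does not supply its own proof of this proposition; it is quoted from \cite{Pas20b} as background. Your plan---applying Proposition~\ref{prop:NegWHar} with the harmonic test sequence $a_n=1/n$---is exactly how the corollary is obtained in that reference, and your verifications of (i), (iii), and the divergence half of (ii) are correct.

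The step you flag as delicate is in fact routine; your outlined combinatorics already settle it. After homogeneity and repetition invariance, comparing $m\,\M(1,\tfrac12,\dots,\tfrac1m)$ with $n\,\M(1,\tfrac12,\dots,\tfrac1n)$ for $m\le n$ becomes a termwise comparison of two vectors of common length $mn$, whose $j$-th entries are $m/\lceil j/n\rceil$ and $n/\lceil j/m\rceil$ respectively. Since $n\lceil j/n\rceil\ge \max(j,m)$ (using $\lceil j/n\rceil\ge1$ and $n\ge m$) while $m\lceil j/m\rceil\le j+m\le 2\max(j,m)$, monotonicity gives
\[
n\,\M\big(1,\tfrac12,\dots,\tfrac1n\big)\ \ge\ \tfrac12\, m\,\M\big(1,\tfrac12,\dots,\tfrac1m\big)\qquad(m\le n),
\]
so $\varepsilon=\tfrac12$ works uniformly and no further subtlety arises. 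Incidentally, the present paper's Theorem~\ref{thm:main} furnishes an even shorter route: with $a_n=1/n$ one only has to verify \eq{E:condThm}, which is immediate from the hypothesis, and repetition invariance is then not needed at all.
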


\section{Results}
We are going to generalize Proposition~\ref{prop:NegWHar} in the spirit of Proposition~\ref{prop:NegHar}. Observe that, as for now, we have two additional assumptions in Proposition~\ref{prop:NegWHar} (the second and the third one). We do relax (or even omit) them. 

Let us start with a purely technical lemma, which shows the existence of a sequence which in some sense breaks the convergence property.

\begin{lem}\label{lem:1}
For every sequences  $(a_n)_{n=1}^\infty$, $(c_n)_{n=1}^\infty$ of positive reals such that
\begin{itemize}
    \item $(a_n)_{n=1}^\infty$ is bounded;
    \item $c_n \to +\infty$;
    \item $\sum_{n=1}^\infty a_n = +\infty$.
\end{itemize}
Then there exists a strictly decreasing sequence $(r_n)_{n=1}^\infty$ such that 
\Eq{E:lem1}{
\sum_{n=1}^\infty a_nr_n<+\infty\quad\text{ and }\quad \sum_{n=1}^\infty a_nc_nr_n=+\infty.
}
\end{lem}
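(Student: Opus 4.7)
The plan is to partition $\N$ (up to finitely many initial indices) into finite blocks $B_1,B_2,\dots$ on which $r_n$ is roughly constant equal to $1/k^2$ on $B_k$. Two ingredients drive the choice of blocks: since $c_n\to+\infty$, one can select $N_1<N_2<\cdots$ with $c_n\ge 2^k$ for all $n\ge N_k$; since $\sum a_n=+\infty$ and $M:=\sup_n a_n<+\infty$, the tail of $(a_n)$ can be cut into successive blocks whose $a$-mass lies in $[1,1+M]$. Combining the two, I choose $n_1<n_2<\cdots$ inductively so that $n_{k+1}$ is the least integer exceeding both $n_k$ and $N_{k+1}$ for which $\sum_{n=n_k}^{n_{k+1}-1}a_n\ge 1$, and set $B_k:=\{n_k,\dots,n_{k+1}-1\}$. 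Then $c_n\ge 2^k$ on $B_k$ and $1\le\sum_{n\in B_k}a_n\le 1+M$.

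Pretending for a moment that $r_n\equiv 1/k^2$ on $B_k$, one already gets $\sum_{n\in B_k}a_nr_n\le (1+M)/k^2$ and $\sum_{n\in B_k}a_nc_nr_n\ge 2^k/k^2$, so the two asymptotics in \eq{E:lem1} would hold. To recover strict monotonicity I perturb inside each block by a small strictly increasing corrector:
\[
r_n:=\frac{1}{k^2}-\left(\frac{1}{k^2}-\frac{1}{(k+1)^2}\right)\frac{n-n_k}{|B_k|},\qquad n\in B_k.
\]
A direct verification shows $r_n\in(1/(k+1)^2,\,1/k^2]$ on $B_k$, that $r_n$ is strictly decreasing within each block, and that at the block boundary $r_{n_{k+1}-1}>1/(k+1)^2=r_{n_{k+1}}$; the finitely many indices $n<n_1$ can be assigned any strictly decreasing values exceeding $r_{n_1}$. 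Since $r_n$ remains within a factor $2$ of $1/k^2$ on $B_k$, the convergence/divergence estimates sketched above persist verbatim (up to replacing $1/k^2$ by $1/(k+1)^2$ in the lower bound), yielding $\sum a_nr_n<+\infty$ and $\sum a_nc_nr_n=+\infty$.

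The only real difficulty is coordinating three competing requirements on $(r_n)$ — strict monotonicity, summability against $(a_n)$, and non-summability against $(a_nc_n)$ — and the block decomposition handles them cleanly: the perturbation takes care of strict monotonicity, while the two defining properties of the blocks (controlled $a$-mass and lower bound on $c_n$) produce the desired sum behaviour independently on each block.
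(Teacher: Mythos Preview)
Your argument has a genuine gap: the claimed upper bound $\sum_{n\in B_k}a_n\le 1+M$ is false in general. You define $n_{k+1}$ as the least integer exceeding both $n_k$ and $N_{k+1}$ with $\sum_{n=n_k}^{n_{k+1}-1}a_n\ge 1$. The ``minimal overshoot'' reasoning yields the bound $1+M$ only when the sum constraint is the binding one; but if $N_{k+1}$ is much larger than $n_k$, it is the inequality $n_{k+1}>N_{k+1}$ that binds, and then $\sum_{n=n_k}^{n_{k+1}-1}a_n$ can be arbitrarily large. Concretely, take $a_n\equiv 1$ (so $M=1$) and let $c_n$ grow so slowly that $N_k=2^{2^{k}}$. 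Then for every $k\ge 1$ the minimality forces $n_{k+1}=N_{k+1}+1$, hence $|B_k|\ge N_{k+1}-N_k-1$, and since your $r_n\ge 1/(k+1)^2$ on $B_k$ this gives
\[
\sum_{n\in B_k}a_nr_n\ \ge\ \frac{N_{k+1}-N_k-1}{(k+1)^2}\ \longrightarrow\ +\infty,
\]
so $\sum_n a_nr_n=+\infty$ and the first half of \eq{E:lem1} fails outright.

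The paper avoids this by decoupling the two requirements. It first groups by $a$-mass alone, obtaining blocks $Q_k$ with $A_k:=\sum_{n\in Q_k}a_n\in[1,1+M]$ genuinely, and thereby reduces to the case $\inf a_n>0$ (equivalently $a_n\equiv 1$). In that reduced case the blocks are chosen purely from the condition $c_n>k$, and one takes $r_n:=\dfrac{1}{(k+1)^2(n_{k+1}-n_k)}$ on the $k$-th block, so that $\sum_{n}r_n=\sum_k 1/(k+1)^2$ exactly while $\sum_{n}c_nr_n\ge \sum_k k/(k+1)^2$. Your attempt to impose the $c$-threshold and the $a$-mass control in a single inductive step is precisely what produces the false bound; separating them --- define blocks by $a$-mass only, set $C_k:=\min_{n\in B_k}c_n\to\infty$, and then apply the special case to the block-level sequences $(A_k)$ and $(C_k)$ --- repairs the argument. (Your linear perturbation to enforce strict monotonicity is fine; the paper achieves the same more simply by multiplying a nonincreasing solution by $1+\tfrac1n$.)
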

\begin{proof}
Observe that if \eq{E:lem1} holds for some nonincreasing sequence $(r_n)$ then it also valid with $r_n$ replaced  by $(1+\tfrac1n) r_n$, which is already strictly decreasing. Thus we can show that there exists a nonincreasing sequence $(r_n)_{n=1}^\infty$ satisfying \eq{E:lem1}. Let us split our proof to two cases.

\medskip
\noindent {\sc Case 1.} If $\inf a_n>0$ then $\tfrac{\sup a_n}{\inf a_n}$ is a finite number. Therefore coefficients $a_n$ on the left-hand-sides of \eq{E:lem1} does not affect their convergence (or divergence). Consequently one can assume without loss of generality that $a_n \equiv 1$.

Since $c_n \to +\infty$, there exists a strictly increasing sequence $(n_k)_{k=1}^\infty$ of natural numbers such that $c_n>k$ for all $n\ge n_k$.
Moreover one can additionally define $n_0:=0$ and claim the mapping $k \mapsto n_{k+1}-n_k$ to be increasing.

Now define $(r_n)_{n=1}^\infty$ by  
\Eq{*}{
r_n:=
\frac{1}{(k+1)^2(n_{k+1}-n_k)} & \text{ for }k \in\{0,1,\dots\}\text{ and }n \in \{n_k+1,\dots,n_{k+1}\}.
}
Then $(r_n)$ is nonincreasing, and
\Eq{*}{
\sum_{n=1}^\infty r_n&=%\sum_{k=0}^\infty \sum_{n=n_k+1}^{n_{k+1}} r_n=
\sum_{k=0}^\infty \sum_{n=n_k+1}^{n_{k+1}} r_n=\sum_{k=0}^\infty \frac{1}{(k+1)^2}<+\infty;\\
\sum_{n=1}^\infty c_nr_n
 &=\sum_{k=0}^\infty \sum_{n=n_k+1}^{n_{k+1}} c_nr_n
\ge\sum_{k=0}^\infty k \sum_{n=n_k+1}^{n_{k+1}} r_n
  =\sum_{k=0}^\infty \frac{k}{(k+1)^2}=+\infty,
}
which completes the prove in the first case.

\medskip
\noindent {\sc Case 2.} In the case $\inf a_n=0$ define the sequence $(q_n)_{n=1}^\infty$ inductively in the following way: $q_1=0$ and $q_{k+1}$ is the smallest natural number above $q_k$ such that $A_k:=\sum_{n=q_k+1}^{q_{k+1}} a_n \ge 1$. 
Denote briefly $Q_k:=\{q_{k}+1,\dots,q_{k+1}\}$, and $C_k:=\inf_{n \in Q_k} c_n$. As $q_k \to \infty$ and $c_n \to \infty$,we have $C_n \to \infty$.

At this stage, in view of the first case of this statement, there exists a nonincreasing sequence $(R_k)_{k=1}^\infty$ such that 
\Eq{*}{
\sum_{k=1}^\infty R_k A_k <\infty \text{ and }\sum_{k=1}^\infty C_kA_k R_k = +\infty.
}
Using this sequence we define $(r_n)_{n=1}^\infty$ by the property $r_n:=R_k$ for all $n \in Q_k$. Then
\Eq{*}{
\sum_{n \in Q_k} a_n r_n= \sum_{n \in Q_k} a_n R_k=A_kR_k.
}
Finally we have 
\Eq{*}{
\sum_{n=1}^\infty a_n r_n &= \sum_{k=1}^\infty \sum_{n \in Q_k} a_n r_n=\sum_{k=1}^\infty A_kR_k < \infty;\\
\sum_{n=1}^\infty a_n c_n r_n &= \sum_{k=1}^\infty \sum_{n \in Q_k} a_n c_n r_n
\ge \sum_{k=1}^\infty C_k \sum_{n \in Q_k} a_n  r_n =\sum_{k=1}^\infty C_k A_kR_k=+\infty.
}
It completes the proof.
\end{proof}

Now we proceed to our main theorem.
\begin{thm}\label{thm:main}
Let $\M \colon \bigcup_{n=1}^\infty \R_+^n \to \R_+$ be a homogeneous and monotone mean. If there exists a sequence 
$(a_n)_{n=1}^\infty$ of positive reals satisfying \eq{E:condThm} then $\M$ is not a weak-Hardy mean.
\end{thm}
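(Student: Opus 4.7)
The strategy is to construct a sequence $b \in \ell_1(\R_+)$ with $\sum_{n=1}^\infty \M(b_1,\dots,b_n)=+\infty$, which directly refutes the weak-Hardy property. Setting $c_n := a_n^{-1}\M(a_1,\dots,a_n)$, the hypotheses read $\sum a_n = +\infty$ and $c_n \to +\infty$. The candidate is $b_n := r_n a_n$ for a strictly decreasing sequence $(r_n)$ to be produced by Lemma~\ref{lem:1}. The point of this ansatz is that, for every $k \le n$, one has $b_k = r_k a_k \ge r_n a_k$, so monotonicity and then homogeneity of $\M$ yield
\[
\M(b_1,\dots,b_n) \ge \M(r_n a_1,\dots,r_n a_n) = r_n\,\M(a_1,\dots,a_n) = r_n a_n c_n.
\]
Summing and invoking the conclusions of Lemma~\ref{lem:1} --- namely $\sum r_n a_n<+\infty$ and $\sum r_n a_n c_n =+\infty$ --- will then deliver both $b\in\ell_1(\R_+)$ and $\sum_{n=1}^\infty \M(b_1,\dots,b_n)=+\infty$.

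The one nontrivial ingredient is checking that Lemma~\ref{lem:1} actually applies, since it demands $(a_n)$ to be bounded while the theorem assumes no such thing. I would therefore first show that the assumption $c_n\to+\infty$ by itself forces boundedness. Because $\M$ is a mean, $\M(a_1,\dots,a_n)\le M_n$ where $M_n:=\max_{k\le n} a_k$, so $c_n \le M_n/a_n$ and hence $M_n/a_n \to +\infty$. For $n$ large enough we thus have $a_n < M_n/2$; combined with the identity $M_n=\max(M_{n-1},a_n)$, this excludes $M_n=a_n$ and forces $M_n=M_{n-1}$. Consequently $M_n$ is eventually constant, $(a_n)$ is bounded, and Lemma~\ref{lem:1} applies to the pair $(a_n),(c_n)$.

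The main obstacle is precisely this boundedness step: without it the candidate construction $b_n=r_na_n$ cannot be produced via Lemma~\ref{lem:1}, and one would be forced either to strengthen the lemma or to build $b$ by entirely different means. Once boundedness is available, the remainder is a short bookkeeping argument that exploits the two assumed features of $\M$ --- monotonicity to replace each $b_k$ by the smaller quantity $r_n a_k$ for $k\le n$, and homogeneity to pull the scalar $r_n$ out of $\M(r_n a_1,\dots,r_n a_n)$. Combined with Lemma~\ref{lem:1}, this concludes the proof.
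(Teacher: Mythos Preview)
Your proposal is correct and follows essentially the same route as the paper: both first deduce boundedness of $(a_n)$ from $c_n\to\infty$ via the mean inequality $\M(a_1,\dots,a_n)\le\max_{k\le n}a_k$, then invoke Lemma~\ref{lem:1} to obtain a strictly decreasing $(r_n)$, and finally use monotonicity together with homogeneity to compare $\M(r_1a_1,\dots,r_na_n)$ with $r_n\M(a_1,\dots,a_n)=r_na_nc_n$. The only cosmetic difference is the order in which monotonicity and homogeneity are applied in this last estimate.
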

\begin{proof}
Fix the sequence $(a_n)$ such that $\sum_{n=1}^\infty a_n=+\infty$ and the related sequence of ratios 
\Eq{*}{
c_n:=a_n^{-1} \M(a_1,\dots,a_n)
}
is divergent. In particular there exists $n_0 \in \N$ such that $c_n >1$ for all $n > n_0$. However
$c_n>1$ yields
\Eq{*}{
a_n < c_na_n=\M(a_1,\dots,a_n) \le \max(a_1,\dots,a_n).
}
Thus $\max(a_1,\dots,a_{n-1})=\max(a_1,\dots,a_{n})$ for all $n > n_0$. 
Consequently $(a_n)$ is bounded from above by $\max(a_1,\dots,a_{n_0})$. Thus, by Lemma~\ref{lem:1}, there exists a decreasing sequence $(r_n)_{n=1}^\infty$ such that \eq{E:lem1} holds.

Now we have
\Eq{*}{
\M(a_1r_1,\dots,a_nr_n)&=r_n\M\Big(a_1\frac{r_1}{r_n},\dots,a_n\frac{r_n}{r_n}\Big) \\ 
&\ge r_n\M(a_1,\dots,a_n)=c_na_nr_n.
}

Consequently
\Eq{*}{
\sum_{n=1}^\infty \M(a_1r_1,\dots,a_nr_n) =\sum_{n=1}^\infty c_n^* a_n r_n \ge\sum_{n=1}^\infty c_n a_n r_n =+\infty,
}
which, in view of the inequality $\sum_{n=1}^\infty a_nr_n<+\infty$, shows that $\M$ is not a weak-Hardy mean.
\end{proof}

\section{Applications}
Recall that by \cite[Corollary~3.5]{PalPas16} for every homogeneous, monotone, symmetric, repetition invariant, and Jensen concave mean $\M$ on $\R_+$ we have
\Eq{*}{
  \Hc(\M)= \lim_{n \to \infty} n \cdot \M\big(1,\tfrac 12,\ldots,\tfrac 1n \big).
}
Moreover, under these assumptions, the limit on the right-hand-side always exists (possibly it is infinite). Therefore in this family two mentioned Hardy-type properties coincide. More precisely one can show the following result.
\begin{prop}
Let $\M$ by a homogeneous, symmetric, repetition invariant, and Jensen concave mean on $\R_+$. Then $\M$ is a Hardy mean if and only if it is a weak-Hardy mean.
\end{prop}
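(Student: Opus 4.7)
The ``Hardy implies weak-Hardy'' direction is immediate from the definitions: if $\sum_{n=1}^\infty \M(a_1,\dots,a_n)\le C\sum_{n=1}^\infty a_n$ and $a\in\ell_1(\R_+)$, the right-hand side is finite, and hence so is the left-hand side.

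For the converse, my plan is to argue by contrapositive, using the asymptotic formula $\Hc(\M)=\lim_{n\to\infty} n\cdot\M(1,\tfrac12,\dots,\tfrac1n)$ recalled from \cite{PalPas16} together with Theorem~\ref{thm:main}. I would suppose that $\M$ is not a Hardy mean, so $\Hc(\M)=+\infty$, and hence $n\cdot\M(1,\tfrac12,\dots,\tfrac1n)\to+\infty$. Setting $a_n:=1/n$ would then give $\sum_{n=1}^\infty a_n=+\infty$ together with
\Eq{*}{
a_n^{-1}\M(a_1,\dots,a_n)=n\cdot\M\bigl(1,\tfrac12,\dots,\tfrac1n\bigr)\longrightarrow+\infty,
}
which is precisely condition \eq{E:condThm}. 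Theorem~\ref{thm:main} would then deliver that $\M$ is not weak-Hardy, completing the contrapositive.

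The one subtlety I expect is that Theorem~\ref{thm:main} requires monotonicity, which is not listed among the hypotheses of the proposition. I would handle this as the main (small) obstacle by observing that, for a symmetric, homogeneous, and Jensen concave mean, each slice $t\mapsto \M(a_1,\dots,a_{i-1},t,a_{i+1},\dots,a_n)$ is midpoint concave (hence concave on $\R_+$, by the standard regularity argument using that the mean is sandwiched between $\min$ and $\max$), and by homogeneity its asymptotic slope at infinity equals $\lim_{s\to 0^+}\M(\ldots,s,\ldots,1)\in[0,1]$, which is nonnegative. Since concavity forces the derivative to be nonincreasing, the derivative must therefore remain nonnegative throughout, yielding monotonicity in each variable. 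Once this verification is in place, everything else is a direct assembly of the previously established tools.
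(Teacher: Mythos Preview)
Your proposal is correct and follows essentially the same route as the paper: both arguments hinge on the formula $\Hc(\M)=\lim_{n\to\infty} n\,\M(1,\tfrac12,\dots,\tfrac1n)$ from \cite{PalPas16}, apply Theorem~\ref{thm:main} to the test sequence $a_n=1/n$, and must first secure monotonicity. The paper simply cites \cite[Lemma~2]{Pas2104} for the implication ``Jensen concave $\Rightarrow$ monotone'', whereas you sketch the argument yourself (concave slice, nonnegative asymptotic slope via homogeneity, hence nonnegative derivative throughout); apart from this and the cosmetic choice of contrapositive versus direct implication, the two proofs coincide.
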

\begin{proof}
Obviously every Hardy mean is a weak-Hardy mean too. To show the converse implication recall that by \cite[Lemma~2]{Pas2104}, since $\M$ is concave, is also monotone.

If $\M$ is a weak-Hardy mean then, applying Theorem~\ref{thm:main} to the test sequence $a_n:=\tfrac1n$, we obtain that \eq{E:condThm} does not hold. However $\sum_{n=1}^\infty a_n=\infty$ and the limit 
$C:=\lim_{n \to \infty} n \cdot \M\big(1,\tfrac 12,\ldots,\tfrac 1n \big)$ exists. Thus the only possibility is that $C$ is finite. 

On the other hand, by \cite[Corollary~3.5]{PalPas16}, we obtain $\Hc(\M)=C<+\infty$, thus $\M$ has a finite Hardy constant, i.e. it is a Hardy mean.
\end{proof}

\end{document}